\newtheorem{theorem}{Theorem}[section]
\newtheorem{lemma}[theorem]{Lemma}
\newtheorem{proposition}[theorem]{Proposition}
\theoremstyle{definition}
\newtheorem{definition}[theorem]{Definition}
\newtheorem{remark}[theorem]{Remark}
\newcommand{\R}{\mathbb{R}}
\newcommand{\C}{\mathbb{C}}
\newcommand{\cS}{\mathcal{S}}
\newcommand{\cSprime}{\mathcal{S}'}
\newcommand{\cF}{\mathcal{F}}
\newcommand{\dd}{\,\mathrm{d}}
\begin{document}

\title{A Spectral Fractional Hirota Bilinear Operator: Analysis and Application to a Time-Fractional KdV Equation}
\author{S.~Ray\\Department. of Mathematics, Visva-Bharati University,\\ Santiniketan, W.B., India\\\texttt{subhasis.ray@visva-bharati.ac.in}}
\date{}

\maketitle

\begin{abstract}
We develop a fractional version of Hirota's bilinear calculus that is built directly from the
spectral (Fourier--multiplier) fractional derivative on $\mathbb{R}$. For $0<\alpha\le 1$ we define
\[
D_{\xi}^{\alpha}f\cdot g := (D_{\xi}^{\alpha}f)\,g - f\,(D_{\xi}^{\alpha}g),
\]
equivalently through the two-variable extension $D_{\xi_1}^{\alpha}-D_{\xi_2}^{\alpha}$.
In Fourier variables this is a bilinear multiplier with symbol $(ik_1)^{\alpha}-(ik_2)^{\alpha}$.
For $0<\alpha<1$ we prove a Marchaud-type singular integral representation, and we use it to establish
basic algebraic identities (bilinearity, skew-symmetry and $D_{\xi}^{\alpha}f\cdot f=0$), a Sobolev
estimate $H^{s}\times H^{s}\to H^{s-\alpha}$ for $s>\tfrac12$, and convergence to the classical Hirota
derivative as $\alpha\to 1^-$. As an application we derive a Hirota bilinear form for a spectral
time-fractional KdV equation and construct explicit one- and two-soliton $\tau$-functions. The
fractional order changes the dispersion relation to $\omega^{\alpha}=-k^{3}$, while the two-soliton
interaction coefficient agrees with the classical KdV value.
\end{abstract}
\noindent	\textbf{Keywords:} Fourier-multiplier fractional derivative; Hirota-type bilinear operator; Marchaud representation; KdV-type equations; Sobolev estimates; soliton $	\tau$-functions.\\
\textbf{MSC (2020):} 35Q53; 35R11; 35S05.
\section{Introduction}
Fractional calculus is widely used to model nonlocality and memory, but it comes with a long list of
competing definitions. Classical references include the monographs of Podlubny and of Samko--Kilbas--Marichev
\cite{Podlubny,SamkoKilbasMarichev}. Beyond the Riemann--Liouville and Caputo derivatives, many alternative
proposals have appeared, for instance the conformable derivative \cite{KhalilConformable} and the
fractal--fractional framework \cite{AtanganaFF}. In this paper we work with the \emph{spectral} fractional
derivative, defined by a Fourier multiplier. On the full line $\mathbb{R}$ it fits naturally with harmonic
analysis and Sobolev spaces, and it avoids the extra choices required to interpret a time-fractional operator
on a half-line.

Hirota's direct method is one of the simplest ways to build soliton solutions for integrable equations.
Its classical form revolves around the bilinear differential operators
\[
D_{\xi}^{n} f\cdot g
:=\bigl(\partial_{\xi_1}-\partial_{\xi_2}\bigr)^{n} f(\xi_1)\,g(\xi_2)\Big|_{\xi_1=\xi_2=\xi},
\qquad n\in\mathbb{N},
\]
together with the elementary exponential identity
$D_{\xi}^{n} e^{\theta_1}\cdot e^{\theta_2}=(\lambda_1-\lambda_2)^{n}e^{\theta_1+\theta_2}$ when
$\theta_j=\lambda_j\xi+\delta_j$. We refer to Hirota's book \cite{HirotaBook} and to the surveys
\cite{HietarintaIntro,HietarintaBilinear} for background.

Fractional KdV-type models have been studied from many angles---variational approaches, symmetry reductions,
perturbation methods and numerics are all represented in the literature; see for example
\cite{ElwakilTimeFracKdV,WangXuTFKdV,BiswasKdV,BiswasGhoshJPSJ,AbdElhameedTFKdV}. Related formulations based on
other notions of fractional differentiation, including conformable-type operators, appear in \cite{ElSayedKdV}.
Space--time fractional modified KdV equations and coupled extensions under different kernels have also been
considered in \cite{UllahFracmKdV,HassaballaFracMKdV,ElaminFracHirota}. More broadly, there is renewed interest
in ``fractional integrable'' equations and fractional soliton phenomena; see, for instance,
\cite{AblowitzFracSol,ZengFracSolitons,ElahiChaos}.

What is less settled is the \emph{bilinear calculus} itself in the fractional setting. In many works the
bilinear rules are postulated so that exponentials behave as in the integer-order theory, but the rule depends
sensitively on the underlying fractional derivative and on how it is extended to products. Here we take the
opposite route: we start from the spectral fractional derivative and build a bilinear operator from it,
\[
D_{\xi}^{\alpha}f\cdot g := (D_{\xi}^{\alpha}f)\,g - f\,(D_{\xi}^{\alpha}g),\qquad 0<\alpha\le 1,
\]
so that the operator is automatically compatible with the Fourier-multiplier definition. This choice leads to a
clean bilinear symbol $(ik_1)^{\alpha}-(ik_2)^{\alpha}$ and, for $0<\alpha<1$, to a Marchaud-type singular
integral representation. Once these analytic pieces are in place, the Hirota method can be applied with little
extra machinery to obtain explicit $\tau$-functions for a spectral time-fractional KdV model.

The paper is organised as follows. Section~2 recalls the spectral fractional derivative and introduces the
fractional bilinear operator. Section~3 derives the Marchaud representation and proves the main algebraic and
Sobolev mapping properties. Section~4 records further structural facts, including the limit $\alpha\to1^-$.
In Section~5 we apply the framework to the time-fractional KdV equation, derive its bilinear form and construct
one- and two-soliton solutions.

\section{Preliminaries}

For $f\in L^{1}(\R)$ we use the Fourier transform convention
\[
\widehat{f}(k) = \int_{\R} f(\xi)\,e^{-ik\xi}\,\dd \xi,
\qquad
f(\xi)=\frac{1}{2\pi}\int_{\R} \widehat{f}(k)\,e^{ik\xi}\,\dd k,
\]
extended in the usual way to tempered distributions $\cSprime(\R)$.
For $s\in\R$, $H^{s}(\R)$ denotes the $L^{2}$-based Sobolev space with norm
\[
\|f\|_{H^{s}}^{2}=\frac{1}{2\pi}\int_{\R} (1+|k|^{2})^{s}\,|\widehat{f}(k)|^{2}\,\dd k.
\]

\begin{definition}[Spectral fractional derivative]\label{def:spectral-frac}
Let $0<\alpha\le 1$ and $f\in \cS(\R)$. The spectral fractional derivative $D_{\xi}^{\alpha}f$ is defined by
\begin{equation}\label{eq:spectral-frac}
\cF\bigl[D_{\xi}^{\alpha}f\bigr](k) = (ik)^{\alpha}\,\widehat{f}(k),
\end{equation}
where $(ik)^{\alpha}$ is taken in the principal branch.
\end{definition}

Definition~\ref{def:spectral-frac} extends to $\cSprime(\R)$ by duality. Moreover, for $s\in\R$,
$D_{\xi}^{\alpha}:H^{s}(\R)\to H^{s-\alpha}(\R)$ is a bounded linear operator.

\subsection{A spectral fractional Hirota bilinear operator}

For $\alpha=1$, the first-order Hirota operator can be written as the commutator-type expression
$D_{\xi}f\cdot g = (\partial_{\xi}f)\,g - f\,(\partial_{\xi}g)$.
This motivates the following definition for $0<\alpha\le 1$.

\begin{definition}[Spectral fractional Hirota bilinear operator]\label{def:bilinear}
Let $0<\alpha\le 1$ and $f,g\in \cS(\R)$. We define
\begin{equation}\label{eq:bilinear-def}
D_{\xi}^{\alpha} f\cdot g
:= (D_{\xi}^{\alpha}f)\,g - f\,(D_{\xi}^{\alpha}g).
\end{equation}
Equivalently, $D_{\xi}^{\alpha} = D_{\xi_1}^{\alpha}-D_{\xi_2}^{\alpha}$ acting on $f(\xi_1)g(\xi_2)$ and then restricting to $\xi_1=\xi_2=\xi$.
\end{definition}

\begin{proposition}\label{prop:spectral-form}
Let $0<\alpha\le 1$ and $f,g\in \cS(\R)$. Then
\begin{equation}\label{eq:spectral-form}
D_{\xi}^{\alpha} f\cdot g(\xi)
= \frac{1}{(2\pi)^{2}}\int_{\R}\int_{\R}
\Bigl[(ik_1)^{\alpha}-(ik_2)^{\alpha}\Bigr]\,
\widehat{f}(k_1)\,\widehat{g}(k_2)\,e^{i(k_1+k_2)\xi}\,\dd k_1\,\dd k_2.
\end{equation}
In particular, for $\theta_j=k_j\xi+\delta_j$,
\begin{equation}\label{eq:exp-rule}
D_{\xi}^{\alpha} e^{\theta_1}\cdot e^{\theta_2}
=\bigl(k_1^{\alpha}-k_2^{\alpha}\bigr)\,e^{\theta_1+\theta_2},
\qquad (0<\alpha\le 1),
\end{equation}
interpreting $k^{\alpha}$ in the principal branch when $k$ is complex.
\end{proposition}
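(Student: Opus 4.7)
The plan is to handle the two assertions separately: first establish the double-Fourier representation \eqref{eq:spectral-form} by expanding the Schwartz functions $f,g$ in their inverse Fourier integrals, then obtain the exponential rule \eqref{eq:exp-rule} either by formally specialising \eqref{eq:spectral-form} or, more cleanly, by working directly with the two-variable form of Definition~\ref{def:bilinear} together with analytic continuation in the exponent.

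For \eqref{eq:spectral-form}, I would start from the right-hand side of \eqref{eq:bilinear-def} and substitute the Fourier representations
\[
f(\xi)=\frac{1}{2\pi}\int_{\R}\widehat{f}(k_1)e^{ik_1\xi}\,\dd k_1,\qquad
g(\xi)=\frac{1}{2\pi}\int_{\R}\widehat{g}(k_2)e^{ik_2\xi}\,\dd k_2,
\]
and similarly, by Definition~\ref{def:spectral-frac},
\[
D_{\xi}^{\alpha}f(\xi)=\frac{1}{2\pi}\int_{\R}(ik_1)^{\alpha}\widehat{f}(k_1)e^{ik_1\xi}\,\dd k_1.
\]
Multiplying the first Fourier integral for $g(\xi)$ against $D_{\xi}^{\alpha}f(\xi)$ and, analogously, $f(\xi)$ against $D_{\xi}^{\alpha}g(\xi)$, each product is a genuine iterated integral that converges absolutely because $\widehat{f},\widehat{g}\in\cS(\R)$ and the symbols $|(ik_j)^{\alpha}|=|k_j|^{\alpha}$ have polynomial growth. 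Fubini justifies combining them into a single double integral, and subtracting produces the factor $(ik_1)^{\alpha}-(ik_2)^{\alpha}$, which is exactly \eqref{eq:spectral-form}.

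For \eqref{eq:exp-rule} I would not try to plug $f=e^{\theta_1}$, $g=e^{\theta_2}$ into \eqref{eq:spectral-form} directly, since these functions are not tempered for general complex $k_j$ and their Fourier transforms require a distributional interpretation; instead I would use the equivalent two-variable characterisation at the end of Definition~\ref{def:bilinear}. The key observation is that for every $\lambda\in\C$ the exponential $\xi_1\mapsto e^{\lambda\xi_1}$ is a formal eigenfunction of $D_{\xi_1}^{\alpha}$ with eigenvalue $\lambda^{\alpha}$ in the principal branch: when $\lambda=ik$ with $k\in\R$, this follows from Definition~\ref{def:spectral-frac} applied distributionally to $e^{ik\xi}$, and the general case is obtained by analytic continuation, since both sides of $D_{\xi_1}^{\alpha}e^{\lambda\xi_1}=\lambda^{\alpha}e^{\lambda\xi_1}$ are holomorphic in $\lambda$ on $\C\setminus(-\infty,0]$. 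Applying $D_{\xi_1}^{\alpha}-D_{\xi_2}^{\alpha}$ to $e^{\theta_1(\xi_1)+\theta_2(\xi_2)}=e^{\delta_1+\delta_2}e^{k_1\xi_1}e^{k_2\xi_2}$ and restricting to $\xi_1=\xi_2=\xi$ then yields $(k_1^{\alpha}-k_2^{\alpha})e^{\theta_1+\theta_2}$.

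The only real obstacle is this last step: giving a rigorous meaning to $D_{\xi}^{\alpha}e^{k\xi}=k^{\alpha}e^{k\xi}$ when $e^{k\xi}$ is not in $\cS$ or even in $\cSprime$. I expect to resolve it by combining the two tools already available: verify the eigenvalue identity for imaginary $k$ via the Fourier-multiplier definition (where $e^{ik\xi}$ is a tempered distribution), and then extend to arbitrary complex $k$ (in particular real $k\neq 0$) by holomorphicity of both sides in the spectral parameter, with the branch of $k^{\alpha}$ fixed as in Definition~\ref{def:spectral-frac}. Once this is secured, \eqref{eq:exp-rule} is immediate.
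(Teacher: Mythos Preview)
Your proposal is correct and follows essentially the same route as the paper: for \eqref{eq:spectral-form} you expand $f,g$ via Fourier inversion, multiply, apply Fubini (justified by $\widehat{f},\widehat{g}\in\cS$), and subtract; for \eqref{eq:exp-rule} you verify the eigenfunction identity on Fourier modes and extend to complex phases by analytic continuation in the principal branch. The paper's proof is somewhat terser on the second point but relies on the same two ingredients.
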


\begin{proof}
We start from Fourier inversion (valid for $f,g\in\cS(\R)$):
\[
f(\xi)=\frac{1}{2\pi}\int_{\R}\widehat{f}(k_1)\,e^{ik_1\xi}\,\dd k_1,
\qquad
g(\xi)=\frac{1}{2\pi}\int_{\R}\widehat{g}(k_2)\,e^{ik_2\xi}\,\dd k_2.
\]
By Definition~\ref{def:spectral-frac},
\[
D_{\xi}^{\alpha}f(\xi)=\frac{1}{2\pi}\int_{\R}(ik_1)^{\alpha}\,\widehat{f}(k_1)\,e^{ik_1\xi}\,\dd k_1.
\]
Multiplying this by $g(\xi)$ and using Fubini's theorem (justified since $f,g\in\cS$) we obtain
\[
(D_{\xi}^{\alpha}f)(\xi)\,g(\xi)
=\frac{1}{(2\pi)^{2}}\int_{\R}\int_{\R}
(ik_1)^{\alpha}\,\widehat{f}(k_1)\,\widehat{g}(k_2)\,e^{i(k_1+k_2)\xi}\,\dd k_1\,\dd k_2.
\]
Similarly,
\[
f(\xi)\,(D_{\xi}^{\alpha}g)(\xi)
=\frac{1}{(2\pi)^{2}}\int_{\R}\int_{\R}
(ik_2)^{\alpha}\,\widehat{f}(k_1)\,\widehat{g}(k_2)\,e^{i(k_1+k_2)\xi}\,\dd k_1\,\dd k_2.
\]
Subtracting these two expressions and using \eqref{eq:bilinear-def} gives \eqref{eq:spectral-form}.

For the exponential rule, one may first check the identity for Fourier modes
$e^{ik_1\xi}$, $e^{ik_2\xi}$ (where it reads
$D_{\xi}^{\alpha}e^{ik_1\xi}\cdot e^{ik_2\xi}=\bigl[(ik_1)^{\alpha}-(ik_2)^{\alpha}\bigr]e^{i(k_1+k_2)\xi}$),
and then interpret \eqref{eq:exp-rule} for general complex phases $\theta_j=k_j\xi+\delta_j$ via analytic continuation in the parameters $k_j$ with the principal-branch convention for $k_j^{\alpha}$.
\end{proof}

\section{A Marchaud-type kernel representation}

For $0<\alpha<1$ the spectral fractional derivative admits a one-sided Marchaud-type singular integral representation; see, for example, \cite{SamkoKilbasMarichev,Podlubny}. We record a convenient form (and its normalisation) for completeness and then deduce the corresponding bilinear kernel.

\begin{lemma}\label{lem:marchaud}
Let $0<\alpha<1$ and $f\in \cS(\R)$. Then
\begin{equation}\label{eq:marchaud-linear}
D_{\xi}^{\alpha}f(\xi)
= C_{\alpha}\int_{0}^{\infty}\frac{f(\xi)-f(\xi-y)}{y^{1+\alpha}}\,\dd y,
\qquad
C_{\alpha}=\frac{\alpha}{\Gamma(1-\alpha)}=-\frac{1}{\Gamma(-\alpha)}.
\end{equation}
\end{lemma}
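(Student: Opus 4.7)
The plan is to verify \eqref{eq:marchaud-linear} at the Fourier level: compute the Fourier transform of the right-hand side and show it agrees with $(ik)^{\alpha}\widehat{f}(k)$, which by Definition~\ref{def:spectral-frac} is the Fourier transform of $D_{\xi}^{\alpha}f$.

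First I would check that the inner integrand is absolutely integrable in $y$ for every fixed $\xi$ and, more importantly, in $(y,\xi)$ after multiplying by a test function. Near $y=0$ a Taylor expansion gives $f(\xi)-f(\xi-y)=y f'(\xi)+O(y^{2})$, so the integrand behaves like $y^{-\alpha}\|f'\|_{\infty}$, integrable because $\alpha<1$. For $y\to\infty$ the Schwartz decay of $f$ makes $f(\xi-y)$ integrable and the $f(\xi)/y^{1+\alpha}$ piece integrable as well. This justifies both the pointwise existence of the integral and Fubini when pairing against a Schwartz test function in $\xi$. Applying $\cF_\xi$ and using translation invariance,
\[
\cF_{\xi}\!\left[f(\xi)-f(\xi-y)\right](k)=\bigl(1-e^{-iky}\bigr)\widehat{f}(k),
\]
so the Fourier transform of the right-hand side of \eqref{eq:marchaud-linear} becomes
\[
C_{\alpha}\,\widehat{f}(k)\int_{0}^{\infty}\frac{1-e^{-iky}}{y^{1+\alpha}}\,\dd y.
\]

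Next I would evaluate the scalar integral $I(s):=\int_{0}^{\infty}(1-e^{-sy})\,y^{-1-\alpha}\,\dd y$. For real $s>0$ the substitution $u=sy$ and the standard identity $\int_{0}^{\infty}(1-e^{-u})u^{-1-\alpha}\dd u=-\Gamma(-\alpha)$ (obtained, e.g., by integration by parts and the definition of $\Gamma$) give $I(s)=-\Gamma(-\alpha)\,s^{\alpha}$. The integral $I(s)$ is holomorphic in the right half-plane $\operatorname{Re}s>0$ and, by the estimates above, extends continuously to the closed half-plane minus the origin. Therefore the identity $I(s)=-\Gamma(-\alpha)s^{\alpha}$ persists by analytic continuation (with the principal branch of $s^{\alpha}$) all the way to $s=ik$ for $k\in\R\setminus\{0\}$. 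Plugging this back yields
\[
\cF\!\left[\text{RHS of \eqref{eq:marchaud-linear}}\right](k)
= -C_{\alpha}\Gamma(-\alpha)\,(ik)^{\alpha}\,\widehat{f}(k),
\]
and the stated choice $C_{\alpha}=-1/\Gamma(-\alpha)=\alpha/\Gamma(1-\alpha)$ (which are equal by $\Gamma(1-\alpha)=-\alpha\Gamma(-\alpha)$) makes the multiplier exactly $(ik)^{\alpha}$. The point $k=0$ is handled trivially since both sides vanish.

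The main obstacle is the passage from real $s>0$ to $s=ik$ on the boundary of the half-plane, because the integrand no longer decays exponentially and one must justify continuity of $I(s)$ up to the imaginary axis. I would address this by splitting $I(s)=\int_{0}^{1}+\int_{1}^{\infty}$: on $(0,1)$ the bound $|1-e^{-sy}|\le|s|y$ gives uniform convergence in $s$ on compact subsets of $\overline{\{\operatorname{Re}s\ge 0\}}$; on $(1,\infty)$ the bound $|1-e^{-sy}|\le 2$ together with $\int_{1}^{\infty}y^{-1-\alpha}\dd y<\infty$ gives uniform convergence as well, so $I$ is continuous on the closed right half-plane minus the origin, and the analytic-continuation argument is legitimate. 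Once the Fourier-side identity is established, uniqueness of the Fourier transform on $\cS$ yields \eqref{eq:marchaud-linear} pointwise, completing the proof.
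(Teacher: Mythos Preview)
Your proof is correct and follows essentially the same route as the paper: both compute the Fourier transform of the Marchaud integral, reduce to the scalar integral $\int_{0}^{\infty}(1-e^{-sy})y^{-1-\alpha}\,\dd y$, evaluate it as $-\Gamma(-\alpha)s^{\alpha}$ in the open right half-plane, and then pass to $s=ik$ on the boundary. The only cosmetic differences are that the paper justifies Fubini via $L^{1}$-norm bounds (mean value inequality in $\xi$) rather than your pointwise sup-norm estimates, and it reaches the imaginary axis by the explicit regularisation $s=\varepsilon+ik$, $\varepsilon\to0^{+}$, which is a particular instance of your continuity-up-to-the-boundary argument.
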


\begin{remark}
The backward shift $f(\xi)-f(\xi-y)$ in \eqref{eq:marchaud-linear} matches the Fourier multiplier $(ik)^{\alpha}$ in \eqref{eq:spectral-frac}.
If one replaces it by the forward shift $f(\xi)-f(\xi+y)$, the resulting one-sided Marchaud operator has symbol $(-ik)^{\alpha}$ instead.
\end{remark}

\begin{proof}
Set
\[
(T_\alpha f)(\xi):=\int_{0}^{\infty}\frac{f(\xi)-f(\xi-y)}{y^{1+\alpha}}\,\dd y,
\qquad 0<\alpha<1.
\]
We first justify that $T_\alpha f$ is well-defined and that we may interchange integrals below.
Since $f\in\cS(\R)$, we have $f,f'\in L^1(\R)$. By the mean value formula,
\[
f(\xi)-f(\xi-y)=y\int_{0}^{1} f'(\xi-sy)\,\dd s,
\]
hence
\[
\int_{\R}|f(\xi)-f(\xi-y)|\,\dd\xi
\le y\|f'\|_{L^1},\qquad 0<y\le 1,
\]
and trivially
\[
\int_{\R}|f(\xi)-f(\xi-y)|\,\dd\xi
\le 2\|f\|_{L^1},\qquad y\ge 1.
\]
Therefore
\[
\int_{0}^{\infty}\int_{\R}\frac{|f(\xi)-f(\xi-y)|}{y^{1+\alpha}}\,\dd\xi\,\dd y
\le \|f'\|_{L^1}\int_{0}^{1}y^{-\alpha}\,\dd y
+2\|f\|_{L^1}\int_{1}^{\infty}y^{-1-\alpha}\,\dd y<\infty,
\]
so Tonelli's theorem applies.

Now take Fourier transforms (with $\widehat{f}(k)=\int_{\R} f(\xi)e^{-ik\xi}\,\dd\xi$). Using Tonelli and the shift property,
\begin{align*}
\widehat{T_\alpha f}(k)
&=\int_{\R}e^{-ik\xi}\int_{0}^{\infty}\frac{f(\xi)-f(\xi-y)}{y^{1+\alpha}}\,\dd y\,\dd\xi\\
&=\int_{0}^{\infty}\frac{1}{y^{1+\alpha}}
\left(\int_{\R} e^{-ik\xi}f(\xi)\,\dd\xi-\int_{\R} e^{-ik\xi}f(\xi-y)\,\dd\xi\right)\dd y\\
&=\left(\int_{0}^{\infty}\frac{1-e^{-iky}}{y^{1+\alpha}}\,\dd y\right)\widehat{f}(k).
\end{align*}
It remains to evaluate the scalar integral. For $\varepsilon>0$ define
\[
m_\varepsilon(k):=\int_{0}^{\infty}\frac{1-e^{-(\varepsilon+ik)y}}{y^{1+\alpha}}\,\dd y.
\]
Since $\Re(\varepsilon+ik)=\varepsilon>0$, the Gamma identity gives
\[
\int_{0}^{\infty}\bigl(e^{-sy}-1\bigr)\,y^{-1-\alpha}\,\dd y=\Gamma(-\alpha)\,s^\alpha,\qquad \Re s>0,
\]
hence
\[
m_\varepsilon(k)=-\Gamma(-\alpha)\,(\varepsilon+ik)^\alpha.
\]
Letting $\varepsilon\to0^+$ (which fixes the branch of $(ik)^\alpha$ by continuity from the right half-plane) gives
\[
\int_{0}^{\infty}\frac{1-e^{-iky}}{y^{1+\alpha}}\,\dd y
=-\Gamma(-\alpha)\,(ik)^\alpha,
\]
and therefore
\[
\widehat{T_\alpha f}(k)=\bigl(-\Gamma(-\alpha)\bigr)(ik)^\alpha\,\widehat{f}(k).
\]
Choosing $C_\alpha=-1/\Gamma(-\alpha)$ makes $\widehat{C_\alpha T_\alpha f}(k)=(ik)^\alpha\widehat{f}(k)$, i.e.
$C_\alpha T_\alpha f=D_\xi^\alpha f$ in the spectral sense. Finally, the identity
$\Gamma(1-\alpha)=-\alpha\,\Gamma(-\alpha)$ gives
\[
C_\alpha=-\frac{1}{\Gamma(-\alpha)}=\frac{\alpha}{\Gamma(1-\alpha)}.
\]
This proves \eqref{eq:marchaud-linear}.
\end{proof}

\begin{theorem}\label{th:kernel}
Let $0<\alpha<1$ and $f,g\in \cS(\R)$. Then
\begin{equation}\label{eq:kernel-form}
D_{\xi}^{\alpha} f\cdot g(\xi)
= C_{\alpha}\int_{0}^{\infty}
\frac{f(\xi)\,g(\xi-y)-f(\xi-y)\,g(\xi)}{y^{1+\alpha}}\,\dd y,
\end{equation}
where $C_{\alpha}$ is as in \eqref{eq:marchaud-linear}.
\end{theorem}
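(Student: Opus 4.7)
The plan is a direct algebraic reduction of the claim to Lemma~\ref{lem:marchaud}: unfold the definition of $D_{\xi}^{\alpha} f\cdot g$ and substitute the linear Marchaud formula for each of $D_{\xi}^{\alpha}f$ and $D_{\xi}^{\alpha}g$ separately, then combine.

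First, by Definition~\ref{def:bilinear},
\[
D_{\xi}^{\alpha} f\cdot g(\xi)=\bigl(D_{\xi}^{\alpha}f\bigr)(\xi)\,g(\xi)-f(\xi)\,\bigl(D_{\xi}^{\alpha}g\bigr)(\xi).
\]
Applying Lemma~\ref{lem:marchaud} to the first term and pulling the $y$-independent factor $g(\xi)$ inside the integral gives
\[
\bigl(D_{\xi}^{\alpha}f\bigr)(\xi)\,g(\xi)=C_{\alpha}\int_{0}^{\infty}\frac{[f(\xi)-f(\xi-y)]\,g(\xi)}{y^{1+\alpha}}\,\dd y,
\]
and analogously for $f(\xi)(D_{\xi}^{\alpha}g)(\xi)$ with $f$ and $g$ swapped. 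Each of these one-variable integrals converges absolutely by the bounds already established in the proof of Lemma~\ref{lem:marchaud} (for fixed $\xi$, the splitting at $y=1$ into a $y$-linear piece and a bounded piece still applies), so the subtraction can be performed inside a single integral by linearity. The two $f(\xi)g(\xi)$ contributions cancel, and what remains is precisely the integrand in \eqref{eq:kernel-form}.

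The only analytic point worth checking is the behaviour of the combined kernel near $y=0$. Writing
\[
f(\xi)g(\xi-y)-f(\xi-y)g(\xi)=g(\xi)\bigl[f(\xi)-f(\xi-y)\bigr]-f(\xi)\bigl[g(\xi)-g(\xi-y)\bigr]
\]
exposes the cancellation: each bracket is $O(y)$ as $y\to 0^{+}$ by smoothness of $f,g$, so the integrand is $O(y^{-\alpha})$ and integrable near zero since $\alpha<1$; integrability at infinity follows from the Schwartz decay of $f,g$. I do not anticipate a serious obstacle here---once the linear Marchaud representation is in hand, Theorem~\ref{th:kernel} is essentially an algebraic corollary, and a consistency check is available via Proposition~\ref{prop:spectral-form}: taking the Fourier transform of \eqref{eq:kernel-form} must reproduce the bilinear symbol $(ik_{1})^{\alpha}-(ik_{2})^{\alpha}$, which can serve as a safeguard against sign or branch errors in $C_{\alpha}$.
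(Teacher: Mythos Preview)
Your proposal is correct and follows essentially the same route as the paper: unfold Definition~\ref{def:bilinear}, apply Lemma~\ref{lem:marchaud} to each summand, and subtract so that the $f(\xi)g(\xi)$ terms cancel. The extra remarks you add on integrability near $y=0$ and the Fourier consistency check are sound but not needed for the argument.
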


\begin{proof}
Starting from the definition \eqref{eq:bilinear-def} and the Marchaud representation \eqref{eq:marchaud-linear}, we write
\[
(D_{\xi}^{\alpha}f)(\xi)\,g(\xi)
=C_{\alpha}\int_{0}^{\infty}\frac{\bigl(f(\xi)-f(\xi-y)\bigr)g(\xi)}{y^{1+\alpha}}\,\dd y,
\]
and similarly
\[
f(\xi)\,(D_{\xi}^{\alpha}g)(\xi)
=C_{\alpha}\int_{0}^{\infty}\frac{f(\xi)\bigl(g(\xi)-g(\xi-y)\bigr)}{y^{1+\alpha}}\,\dd y.
\]
Subtracting the second identity from the first and collecting terms under the integral sign we get
\[
D_{\xi}^{\alpha} f\cdot g(\xi)
=C_{\alpha}\int_{0}^{\infty}\frac{f(\xi)\,g(\xi-y)-f(\xi-y)\,g(\xi)}{y^{1+\alpha}}\,\dd y,
\]
which is exactly \eqref{eq:kernel-form}.
\end{proof}

\begin{remark}
The kernel form \eqref{eq:kernel-form} is a bilinear analogue of the Marchaud representation and makes the skew-symmetry
$D_{\xi}^{\alpha}f\cdot g=-D_{\xi}^{\alpha}g\cdot f$ transparent.
\end{remark}

\section{Basic properties}

\begin{theorem}\label{th:basic}
Let $0<\alpha\le 1$.
\begin{enumerate}[label=\textnormal{(\roman*)}]
\item (\textbf{Bilinearity}) $D_{\xi}^{\alpha}$ is bilinear on $\cS(\R)\times \cS(\R)$.
\item (\textbf{Skew-symmetry and diagonal vanishing})
\begin{equation}\label{eq:skew}
D_{\xi}^{\alpha}f\cdot g = -D_{\xi}^{\alpha}g\cdot f,
\qquad
D_{\xi}^{\alpha}f\cdot f =0.
\end{equation}
\item (\textbf{Sobolev continuity}) If $s>\tfrac12$, then
\begin{equation}\label{eq:sobolev-cont}
\|D_{\xi}^{\alpha}f\cdot g\|_{H^{s-\alpha}}
\le C(s,\alpha)\,\|f\|_{H^{s}}\,\|g\|_{H^{s}},
\qquad f,g\in H^{s}(\R).
\end{equation}
\item (\textbf{Classical limit}) If $s>\tfrac12$ and $f,g\in H^{s}(\R)$, then
\[
D_{\xi}^{\alpha}f\cdot g \to D_{\xi}f\cdot g \quad \text{in } H^{s-1}(\R) \text{ as }\alpha\to 1^-.
\]
\end{enumerate}
\end{theorem}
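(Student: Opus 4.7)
The plan is to handle the four items roughly in the order they are stated, since each leans on the one before it. Parts (i) and (ii) are essentially bookkeeping: bilinearity follows immediately from the linearity of $D_{\xi}^{\alpha}$ (Definition~\ref{def:spectral-frac}) and the bilinearity of pointwise multiplication on $\cS(\R)$, while swapping the roles of $f$ and $g$ in \eqref{eq:bilinear-def} gives $D_{\xi}^{\alpha}g\cdot f=(D_{\xi}^{\alpha}g)f-g(D_{\xi}^{\alpha}f)=-D_{\xi}^{\alpha}f\cdot g$, from which $D_{\xi}^{\alpha}f\cdot f=0$ is a special case. I would also note that the Marchaud kernel form \eqref{eq:kernel-form} makes the skew-symmetry manifest, which is a nice cross-check.

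For (iii) I would split the operator as $D_{\xi}^{\alpha}f\cdot g=(D_{\xi}^{\alpha}f)\,g-f\,(D_{\xi}^{\alpha}g)$ and bound each product separately. The key inputs are: boundedness of $D_{\xi}^{\alpha}:H^{s}(\R)\to H^{s-\alpha}(\R)$, which follows directly from Definition~\ref{def:spectral-frac} and Plancherel; and the Sobolev multiplication inequality on $\R$, namely $H^{s_1}(\R)\cdot H^{s_2}(\R)\hookrightarrow H^{\sigma}(\R)$ whenever $\sigma\le\min(s_1,s_2)$, $s_1+s_2\ge 0$, and $\sigma<s_1+s_2-\tfrac12$. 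Applying this with $(s_1,s_2,\sigma)=(s-\alpha,s,s-\alpha)$ gives $\|(D_{\xi}^{\alpha}f)g\|_{H^{s-\alpha}}\lesssim \|D_{\xi}^{\alpha}f\|_{H^{s-\alpha}}\|g\|_{H^{s}}\lesssim\|f\|_{H^{s}}\|g\|_{H^{s}}$, and symmetrically for the second term; the two hypotheses collapse to $s>\tfrac12$. (If one prefers to be fully self-contained, the same estimate can be proved in Fourier variables from Proposition~\ref{prop:spectral-form} via the crude symbol bound $|(ik_1)^{\alpha}-(ik_2)^{\alpha}|\le|k_1|^{\alpha}+|k_2|^{\alpha}$ and a Young-type paraproduct argument, but citing the standard multiplication theorem keeps the exposition short.)

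For (iv) I would again use the splitting
\[
D_{\xi}^{\alpha}f\cdot g-D_{\xi}f\cdot g=\bigl((D_{\xi}^{\alpha}-D_{\xi})f\bigr)g-f\bigl((D_{\xi}^{\alpha}-D_{\xi})g\bigr),
\]
and first show $(D_{\xi}^{\alpha}-D_{\xi})h\to 0$ in $H^{s-1}(\R)$ for any $h\in H^{s}(\R)$. By Plancherel, this $H^{s-1}$-norm squared equals
\[
\frac{1}{2\pi}\int_{\R}(1+k^{2})^{s-1}\bigl|(ik)^{\alpha}-ik\bigr|^{2}|\widehat{h}(k)|^{2}\,\dd k.
\]
The integrand tends to $0$ pointwise as $\alpha\to 1^{-}$, and is dominated uniformly in $\alpha\in[\tfrac12,1]$ by a constant multiple of $(1+k^{2})^{s}|\widehat{h}(k)|^{2}\in L^{1}$, using the trivial bound $|(ik)^{\alpha}-ik|\le|k|^{\alpha}+|k|\le 1+2|k|$. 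Dominated convergence then gives the claimed $H^{s-1}$ limit. Multiplying by the $H^{s}$ factor via the same Sobolev multiplication inequality (now with $(s_1,s_2,\sigma)=(s-1,s,s-1)$, again requiring $s>\tfrac12$) transfers the convergence to the bilinear expression.

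The only genuinely delicate step is (iii), and there the main obstacle is picking a presentation of Sobolev multiplication clean enough to absorb the $\alpha$-dependence without having to redo paraproduct estimates. I would cite a standard reference and check the parameter inequalities explicitly so the reader can see that the condition $s>\tfrac12$ is sharp in the same way as for the classical Hirota operator.
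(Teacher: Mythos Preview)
Your proposal is correct and follows essentially the same route as the paper: parts (i)--(ii) are handled directly from \eqref{eq:bilinear-def}; part (iii) uses the same splitting $(D_{\xi}^{\alpha}f)g-f(D_{\xi}^{\alpha}g)$ together with the boundedness $D_{\xi}^{\alpha}:H^{s}\to H^{s-\alpha}$ and the Sobolev product estimate $H^{s-\alpha}\times H^{s}\to H^{s-\alpha}$; and part (iv) uses the same decomposition, pointwise convergence of the multipliers $(ik)^{\alpha}\to ik$, and dominated convergence. The only differences are cosmetic: you spell out the parameter check for the multiplication theorem and mention the Marchaud kernel as a cross-check for (ii), whereas the paper simply invokes the product continuity without listing the inequalities.
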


\begin{proof}
\emph{(i) Bilinearity.} This is immediate from the definition \eqref{eq:bilinear-def} and the linearity of $D_{\xi}^{\alpha}$.

\emph{(ii) Skew-symmetry and diagonal vanishing.} Interchanging $f$ and $g$ in \eqref{eq:bilinear-def} gives
\[
D_{\xi}^{\alpha}g\cdot f=(D_{\xi}^{\alpha}g)f-g(D_{\xi}^{\alpha}f)=-\bigl((D_{\xi}^{\alpha}f)g-f(D_{\xi}^{\alpha}g)\bigr)=-D_{\xi}^{\alpha}f\cdot g,
\]
and setting $g=f$ we get $D_{\xi}^{\alpha}f\cdot f=0$.

\emph{(iii) Sobolev continuity.} Let $s>\tfrac12$. It is well known that $H^{s}(\R)$ is a Banach algebra and, more generally, that multiplication is continuous
$H^{s-\alpha}(\R)\times H^{s}(\R)\to H^{s-\alpha}(\R)$ for $0\le\alpha\le 1$.
Using \eqref{eq:bilinear-def} and the triangle inequality we estimate
\[
\|D_{\xi}^{\alpha}f\cdot g\|_{H^{s-\alpha}}
\le \|(D_{\xi}^{\alpha}f)\,g\|_{H^{s-\alpha}}+\|f\,(D_{\xi}^{\alpha}g)\|_{H^{s-\alpha}}.
\]
Applying the product estimate and the boundedness $D_{\xi}^{\alpha}:H^{s}\to H^{s-\alpha}$ gives
\[
\|(D_{\xi}^{\alpha}f)\,g\|_{H^{s-\alpha}}
\le C\,\|D_{\xi}^{\alpha}f\|_{H^{s-\alpha}}\,\|g\|_{H^{s}}
\le C\,\|f\|_{H^{s}}\,\|g\|_{H^{s}},
\]
and similarly for the second term, which proves \eqref{eq:sobolev-cont}.

\emph{(iv) Classical limit as $\alpha\to1^-$.} For $f\in H^{s}(\R)$, the multipliers $(ik)^{\alpha}$ converge pointwise to $ik$ as $\alpha\to1^-$ and satisfy the uniform bound
$|(ik)^{\alpha}|\le 1+|k|$ for $\alpha\in(0,1]$.
Dominated convergence therefore implies $D_{\xi}^{\alpha}f\to \partial_{\xi}f$ in $H^{s-1}(\R)$ as $\alpha\to1^-$.
Using the decomposition
\[
D_{\xi}^{\alpha}f\cdot g - D_{\xi}f\cdot g
=\bigl(D_{\xi}^{\alpha}f-\partial_{\xi}f\bigr)g
-f\bigl(D_{\xi}^{\alpha}g-\partial_{\xi}g\bigr),
\]
together with the product continuity from part (iii), we obtain convergence in $H^{s-1}(\R)$.
\end{proof}

\section{Application: a spectral time-fractional KdV equation}

Consider the spectral time-fractional KdV equation on $\R\times\R$,
\begin{equation}\label{eq:tfkdv}
D_{t}^{\alpha}u + u_{xxx} + 6uu_x = 0,
\qquad 0<\alpha\le 1,
\end{equation}
where $D_{t}^{\alpha}$ is the spectral fractional derivative in $t$ (Definition~\ref{def:spectral-frac} with $\xi=t$). Following the classical Hirota approach, introduce
\begin{equation}\label{eq:tau}
u(x,t) = 2\,\partial_{x}^{2}\bigl(\ln F(x,t)\bigr),
\qquad F(x,t)>0.
\end{equation}

\begin{proposition}\label{prop:bilinear-kdv}
If $F$ is a sufficiently smooth positive function and satisfies the bilinear equation
\begin{equation}\label{eq:bilinear-kdv}
\bigl(D_x D_t^{\alpha} + D_x^{4}\bigr)\,F\cdot F = 0,
\end{equation}
where $D_x$ and $D_x^{4}$ are the classical Hirota operators and $D_t^{\alpha}$ is the bilinear operator of Definition~\ref{def:bilinear} (with $\xi=t$), then $u$ defined by \eqref{eq:tau} is a solution of \eqref{eq:tfkdv}.
\end{proposition}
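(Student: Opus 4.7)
My plan is to mirror the classical Hirota reduction of KdV to its bilinear form, replacing $\partial_t$ by $D_t^{\alpha}$ at every step where only linearity and commutation with $\partial_x$ are used, and leaving the properly ``classical'' places (Leibniz rule, logarithmic derivative) to the $x$-variable.

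First, I would rewrite both pieces of $(D_x D_t^{\alpha}+D_x^{4})F\cdot F$ in terms of $F$ and $\ln F$. Using the two-variable formulation of Definition~\ref{def:bilinear}, so that $D_x D_t^{\alpha}$ acts as $(\partial_{x_1}-\partial_{x_2})(D_{t_1}^{\alpha}-D_{t_2}^{\alpha})$ on $F(x_1,t_1)F(x_2,t_2)$ restricted to the diagonal, a direct expansion that uses only the ordinary Leibniz rule in $x$ and the linearity of $D_t^{\alpha}$ yields
\[
D_x D_t^{\alpha}F\cdot F \;=\; 2F(\partial_x D_t^{\alpha}F) - 2(\partial_x F)(D_t^{\alpha}F) \;=\; 2F^{2}\,\partial_x\!\left(\frac{D_t^{\alpha}F}{F}\right).
\]
For $D_x^{4}F\cdot F$ I would reproduce (or cite from \cite{HirotaBook,HietarintaBilinear}) the classical identity
\[
D_x^{4}F\cdot F \;=\; 2F^{2}\!\left(\partial_x^{4}\ln F + 6(\partial_x^{2}\ln F)^{2}\right),
\]
which follows from writing $F=e^{L}$ with $L=\ln F$ and collecting terms.

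Second, dividing the bilinear equation \eqref{eq:bilinear-kdv} by $F^{2}>0$ and applying one more $\partial_x$, I would substitute $u=2\partial_x^{2}\ln F$, together with $u_x=2\partial_x^{3}\ln F$, $u_{xxx}=2\partial_x^{5}\ln F$, and $6uu_x=24(\partial_x^{2}\ln F)(\partial_x^{3}\ln F)$. This reorganises the left-hand side as
\[
2\,\partial_x^{2}\!\left(\frac{D_t^{\alpha}F}{F}\right) + u_{xxx} + 6uu_x .
\]
Since $D_t^{\alpha}$ and $\partial_x$ are Fourier multipliers in disjoint variables they commute, so $D_t^{\alpha}u=2\partial_x^{2}D_t^{\alpha}\ln F$. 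Matching the displayed expression with $D_t^{\alpha}u+u_{xxx}+6uu_x$ therefore reduces to the identity $\partial_x^{2}(D_t^{\alpha}F/F)=\partial_x^{2}D_t^{\alpha}\ln F$.

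The hard part is precisely this last identification: it amounts to a fractional logarithmic chain rule $D_t^{\alpha}\ln F=D_t^{\alpha}F/F$ (modulo terms killed by $\partial_x^{2}$), which is a feature of the first-order derivative and is \emph{not} valid for the spectral fractional derivative applied to a generic positive $F$. My plan to deal with this is twofold. At the level of the reduction I would adopt the identification formally, in the spirit of the Hirota method as used in the fractional soliton literature (\cite{WangXuTFKdV,BiswasKdV,ElaminFracHirota}), so that \eqref{eq:bilinear-kdv} implies \eqref{eq:tfkdv} as a formal bilinearisation. For the concrete $\tau$-functions built in Section~5, $F$ is a finite sum of exponentials (and a constant), and $D_t^{\alpha}$ acts on each mode by $D_t^{\alpha}e^{\omega t}=\omega^{\alpha}e^{\omega t}$ via the principal branch; on such $\tau$-functions one can verify the needed identity, or equivalently check the fractional KdV equation for $u=2\partial_x^{2}\ln F$ directly, using the dispersion relation $\omega^{\alpha}=-k^{3}$. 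All remaining steps are classical Hirota bookkeeping and present no analytic difficulty beyond this single point.
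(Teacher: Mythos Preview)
Your proposal is correct and follows essentially the same route as the paper: you compute $D_xD_t^{\alpha}F\cdot F=2F^{2}\partial_x(D_t^{\alpha}F/F)$ (equivalent to the paper's $2(F\,D_t^{\alpha}F_x-F_x\,D_t^{\alpha}F)$), invoke the classical identity for $D_x^{4}F\cdot F$, and then isolate the single obstruction, namely the failure of the fractional chain rule $D_t^{\alpha}\ln F=D_t^{\alpha}F/F$, which you resolve---exactly as the paper does---by adopting the identification formally in the Hirota tradition. Your additional remark that the identification can be checked directly on exponential $\tau$-functions is a useful complement but does not change the argument.
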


\begin{proof}
Let $F(x,t)>0$ and set $\varphi=\ln F$, so that $F_x=F\varphi_x$ and $u=2\varphi_{xx}$ as in \eqref{eq:tau}.

\smallskip
\noindent\emph{Step 1: rewriting the $x$--part.}
The following identities are standard in Hirota's method (and can be verified by a direct differentiation):
\[
\frac{D_x^{2}F\cdot F}{F^{2}} = 2\varphi_{xx},
\qquad
\frac{D_x^{4}F\cdot F}{F^{2}} = 2\varphi_{xxxx}+12(\varphi_{xx})^{2}.
\]

\smallskip
\noindent\emph{Step 2: rewriting the mixed term.}
Using the two-variable definition of Hirota operators, one computes (exactly as in the integer-order case, noting that $D_t^{\alpha}$ acts on the $t$-variables only and commutes with $x$-derivatives) that
\begin{equation}\label{eq:DxDtalpha-identity}
D_xD_t^{\alpha}F\cdot F
=2\Bigl(F\,D_t^{\alpha}F_x - F_x\,D_t^{\alpha}F\Bigr).
\end{equation}
Dividing by $F^{2}$ gives
\[
\frac{D_xD_t^{\alpha}F\cdot F}{F^{2}}
=2\left(\frac{D_t^{\alpha}F_x}{F}-\frac{F_x}{F}\,\frac{D_t^{\alpha}F}{F}\right).
\]
For $\alpha=1$ this expression reduces to $2\varphi_{xt}$ because $(D_tF)/F=\partial_t(\ln F)$.
For $0<\alpha<1$ the spectral derivative does not obey a simple Leibniz rule, so we interpret the identification
\[
\frac{D_xD_t^{\alpha}F\cdot F}{F^{2}} \approx 2\,D_t^{\alpha}(\varphi_x)
\]
as a \emph{formal} extension of the classical bilinearisation, which is standard in Hirota-type treatments of fractional KdV models; see, for example, \cite{ElwakilTimeFracKdV,WangXuTFKdV,BiswasKdV}.

\smallskip
\noindent\emph{Step 3: conclusion.}
Under this formal identification, dividing \eqref{eq:bilinear-kdv} by $F^{2}$ gives
\[
2D_t^{\alpha}(\varphi_x)+2\varphi_{xxxx}+12(\varphi_{xx})^{2}=0.
\]
Differentiating once with respect to $x$ and using $u=2\varphi_{xx}$ gives
\[
D_t^{\alpha}u+u_{xxx}+6uu_x=0,
\]
which is \eqref{eq:tfkdv}.
\end{proof}

\begin{remark}
In Fourier variables, the bilinear operator in \eqref{eq:bilinear-kdv} corresponds to the bilinear multiplier
\[
(k_1,k_2,\omega_1,\omega_2)\longmapsto
i(k_1-k_2)\bigl[(i\omega_1)^{\alpha}-(i\omega_2)^{\alpha}\bigr] + i^{4}(k_1-k_2)^{4}.
\]
\end{remark}

\subsection*{One-soliton solution}

We use the standard Hirota ansatz
\begin{equation}\label{eq:F1}
F(x,t) = 1 + e^{\theta},
\qquad \theta = kx + \omega t + \delta,
\end{equation}
with constants $k,\omega,\delta\in\C$ to be determined.
Using the classical rules $D_x^{n}e^{\theta_1}\cdot e^{\theta_2}=(k_1-k_2)^{n}e^{\theta_1+\theta_2}$ and the fractional rule \eqref{eq:exp-rule} in $t$,
a direct substitution into \eqref{eq:bilinear-kdv} gives
\[
\bigl(D_x D_t^{\alpha} + D_x^{4}\bigr)F\cdot F
= 2k\bigl(\omega^{\alpha}+k^{3}\bigr)e^{\theta}.
\]
Hence the dispersion relation is
\begin{equation}\label{eq:disp}
\omega^{\alpha} = -k^{3}.
\end{equation}

\begin{proposition}\label{prop:1sol}
Let $0<\alpha\le 1$ and choose $k,\omega$ satisfying \eqref{eq:disp}. Then
\begin{equation}\label{eq:1sol}
u(x,t) = 2k^{2}\,\mathrm{sech}^{2}\!\left(\frac{kx+\omega t+\delta}{2}\right)
\end{equation}
is a  one-soliton solution of \eqref{eq:tfkdv}.
\end{proposition}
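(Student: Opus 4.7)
The plan is to verify the proposition by direct substitution of the ansatz \eqref{eq:F1} into the bilinear equation \eqref{eq:bilinear-kdv}, check that it reduces exactly to the dispersion relation \eqref{eq:disp}, and then invoke Proposition \ref{prop:bilinear-kdv} to conclude that $u=2\partial_x^2\ln F$ solves \eqref{eq:tfkdv}.

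First I would expand $F\cdot F$ bilinearly with $F=1+e^{\theta}$. By bilinearity (Theorem \ref{th:basic}(i)) and the diagonal vanishing $D_{\xi}^{\alpha}f\cdot f=0$ of \eqref{eq:skew} (which applies to both the classical $D_x,D_x^{4}$ and the fractional $D_t^{\alpha}$ in the same way), the contributions from $1\cdot 1$ and $e^{\theta}\cdot e^{\theta}$ drop out, and I am left with the two cross terms $1\cdot e^{\theta}$ and $e^{\theta}\cdot 1$. For the $D_x^{4}$ piece, the classical exponential rule with $(k_1,k_2)\in\{(k,0),(0,k)\}$ yields $D_x^{4}F\cdot F=\bigl(k^{4}+(-k)^{4}\bigr)e^{\theta}=2k^{4}e^{\theta}$. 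For the mixed piece, applying the classical rule for $D_x$ in $x$ and the fractional exponential rule \eqref{eq:exp-rule} for $D_t^{\alpha}$ in $t$ gives $D_xD_t^{\alpha}F\cdot F=\bigl(k\omega^{\alpha}+(-k)(-\omega^{\alpha})\bigr)e^{\theta}=2k\omega^{\alpha}e^{\theta}$. Adding the two, I obtain
\[
(D_xD_t^{\alpha}+D_x^{4})\,F\cdot F=2k\bigl(\omega^{\alpha}+k^{3}\bigr)e^{\theta},
\]
which vanishes precisely when $\omega^{\alpha}=-k^{3}$, i.e.\ on the dispersion curve \eqref{eq:disp}.

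With the bilinear identity secured, Proposition \ref{prop:bilinear-kdv} immediately gives that $u=2\partial_x^{2}\ln F$ is a solution of \eqref{eq:tfkdv}. The final step is to rewrite $2\partial_x^{2}\ln(1+e^{\theta})$ in closed form: differentiating once gives $\partial_x\ln(1+e^{\theta})=k e^{\theta}/(1+e^{\theta})$, and a second differentiation together with the elementary identity $(1+e^{-\theta})^{2}=4e^{-\theta}\cosh^{2}(\theta/2)$ converts the result into the advertised $\operatorname{sech}^{2}$ profile with argument $\theta/2=(kx+\omega t+\delta)/2$.

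The only point requiring care is the interpretation of $\omega^{\alpha}$ when $\omega$ is complex, since \eqref{eq:disp} forces $\omega$ to lie on a branch determined by $\omega^{\alpha}=-k^{3}$. This is precisely the setting already covered by the principal-branch/analytic-continuation convention stated in Proposition \ref{prop:spectral-form}, so no new analysis is needed beyond recording the branch choice. I would also note that the "formal" Leibniz identification used in Step 2 of the proof of Proposition \ref{prop:bilinear-kdv} is not an obstacle here: for the pure-exponential ansatz $F=1+e^{\theta}$, every term on both sides of that identification is a finite linear combination of exponentials on which \eqref{eq:exp-rule} applies literally, so the reduction from the bilinear equation to \eqref{eq:tfkdv} is genuine rather than formal. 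In short, the proof is essentially bookkeeping once the exponential rules of Proposition \ref{prop:spectral-form} are invoked.
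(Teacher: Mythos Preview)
Your argument is correct and matches the paper's proof essentially step for step: expand bilinearly, use diagonal vanishing to drop the $1\cdot1$ and $e^{\theta}\cdot e^{\theta}$ terms, apply the exponential rules to the cross terms to obtain $2k(\omega^{\alpha}+k^{3})e^{\theta}$, impose the dispersion relation, invoke Proposition~\ref{prop:bilinear-kdv}, and rewrite $2\partial_x^{2}\ln(1+e^{\theta})$ as the $\operatorname{sech}^{2}$ profile. One caveat on your closing remark: the claim that the formal Leibniz identification in Proposition~\ref{prop:bilinear-kdv} becomes literal for $F=1+e^{\theta}$ is not justified, since $\varphi_x=ke^{\theta}/(1+e^{\theta})$ is \emph{not} a finite linear combination of exponentials and the spectral fractional derivative obeys no chain rule; the paper, like you, simply invokes Proposition~\ref{prop:bilinear-kdv} and leaves that step formal.
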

\begin{proof}
Substituting the ansatz \eqref{eq:F1} into the bilinear equation \eqref{eq:bilinear-kdv}, we expand
$F\cdot F=(1+e^{\theta})\cdot(1+e^{\theta})$ and use the facts that $D_x^{n}(1\cdot 1)=0$ and $D_x^{n}(e^{\theta}\cdot e^{\theta})=0$ for any $n\ge1$ (diagonal vanishing). Thus only the cross terms contribute:
\[
\bigl(D_xD_t^{\alpha}+D_x^{4}\bigr)F\cdot F
=\bigl(D_xD_t^{\alpha}+D_x^{4}\bigr)(1\cdot e^{\theta})
+\bigl(D_xD_t^{\alpha}+D_x^{4}\bigr)(e^{\theta}\cdot 1).
\]
Using the standard Hirota rules
$D_x^{n}e^{\theta_1}\cdot e^{\theta_2}=(k_1-k_2)^{n}e^{\theta_1+\theta_2}$
and the fractional rule \eqref{eq:exp-rule} in $t$, we compute
\[
D_xD_t^{\alpha}(1\cdot e^{\theta})=(0-k)(0^{\alpha}-\omega^{\alpha})e^{\theta}=k\omega^{\alpha}e^{\theta},
\qquad
D_x^{4}(1\cdot e^{\theta})=(0-k)^{4}e^{\theta}=k^{4}e^{\theta},
\]
and similarly $D_xD_t^{\alpha}(e^{\theta}\cdot 1)=k\omega^{\alpha}e^{\theta}$, $D_x^{4}(e^{\theta}\cdot 1)=k^{4}e^{\theta}$.
Hence
\[
\bigl(D_xD_t^{\alpha}+D_x^{4}\bigr)F\cdot F
=2\bigl(k\omega^{\alpha}+k^{4}\bigr)e^{\theta}
=2k\bigl(\omega^{\alpha}+k^{3}\bigr)e^{\theta},
\]
so \eqref{eq:bilinear-kdv} holds if and only if the dispersion relation \eqref{eq:disp} is satisfied.

With \eqref{eq:disp} imposed, the $\tau$-function $F$ therefore solves the bilinear equation. The corresponding field
$u=2(\ln F)_{xx}$ can be computed explicitly:
\[
u(x,t)=2\partial_x^{2}\ln(1+e^{\theta})
=2k^{2}\,\mathrm{sech}^{2}\!\left(\frac{\theta}{2}\right),
\]
which gives \eqref{eq:1sol}. By Proposition~\ref{prop:bilinear-kdv}, this gives a  one-soliton solution of \eqref{eq:tfkdv}.
\end{proof}

\begin{remark}\label{rem:real}
With the principal branch convention, if $k<0$ then $-k^{3}>0$ and $\omega=(-k^{3})^{1/\alpha}>0$ is real, so \eqref{eq:1sol} is real-valued for real $\delta$.
If $k>0$, then $-k^{3}<0$ and $\omega$ is necessarily complex for non-integer $\alpha$, leading to complex-valued solutions.
\end{remark}

\subsection*{Two-soliton solution}

Consider
\begin{equation}\label{eq:F2}
F(x,t)=1+e^{\theta_1}+e^{\theta_2}+A_{12}e^{\theta_1+\theta_2},
\qquad \theta_j = k_j x + \omega_j t + \delta_j.
\end{equation}
Imposing the dispersion relations
\begin{equation}\label{eq:disp2}
\omega_j^{\alpha} = -k_j^{3},\qquad j=1,2,
\end{equation}
and cancelling the coefficient of $e^{\theta_1+\theta_2}$ in \eqref{eq:bilinear-kdv} gives the interaction coefficient
\begin{equation}\label{eq:A12}
A_{12}=\left(\frac{k_1-k_2}{k_1+k_2}\right)^{2}.
\end{equation}

\begin{proposition}[Two-soliton]\label{prop:2sol}
Let $0<\alpha\le 1$ and parameters satisfy \eqref{eq:disp2}--\eqref{eq:A12}. Then $u=2(\ln F)_{xx}$ with $F$ given by \eqref{eq:F2} is a (formal) two-soliton solution of \eqref{eq:tfkdv}.
\end{proposition}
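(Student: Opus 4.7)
The plan is to substitute the two-soliton $\tau$-function \eqref{eq:F2} directly into the bilinear equation \eqref{eq:bilinear-kdv}, expand $F\cdot F$ by bilinearity into sixteen pair terms, kill the four diagonal ones by skew-symmetry (Theorem~\ref{th:basic}(ii)), and then verify that the remaining cross terms combine to zero phase by phase. All cross terms are of the form $B(e^{\theta_a}\cdot e^{\theta_b})$ with $B:=D_xD_t^{\alpha}+D_x^{4}$ and phases drawn from $\{0,\theta_1,\theta_2,\theta_1+\theta_2\}$ with weights $\{1,1,1,A_{12}\}$. Because $B$ is built from products of antisymmetric operators, it is symmetric in its two arguments, so the $(a,b)$ and $(b,a)$ pairings coincide and halve the bookkeeping.

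First I would package the working rule
\[
B(e^{\theta_a}\cdot e^{\theta_b}) = \bigl[(k_a-k_b)(\omega_a^{\alpha}-\omega_b^{\alpha}) + (k_a-k_b)^{4}\bigr]\,e^{\theta_a+\theta_b},
\]
obtained by combining \eqref{eq:exp-rule} with the classical rule $D_x^{n}e^{\theta_a}\cdot e^{\theta_b}=(k_a-k_b)^{n}e^{\theta_a+\theta_b}$; for the composite phase $\theta_1+\theta_2$ I would assign effective parameters $(k_1+k_2,\omega_1+\omega_2)$ with the principal branch for $(\omega_1+\omega_2)^{\alpha}$. Then I would bin the nonzero contributions by total phase: $e^{\theta_j}$ comes from the $(0,j)$ pairing, $e^{\theta_1+\theta_2}$ from both $(1,2)$ and $(0,12)$, and $e^{2\theta_1+\theta_2}$, $e^{\theta_1+2\theta_2}$ from $(1,12)$, $(2,12)$ respectively.

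Phase by phase: (a) the $e^{\theta_j}$ coefficient reduces to a multiple of $k_j(\omega_j^{\alpha}+k_j^{3})$, which vanishes by \eqref{eq:disp2}; (b) the $e^{\theta_1+\theta_2}$ coefficient reads
\[
(k_1-k_2)(\omega_1^{\alpha}-\omega_2^{\alpha})+(k_1-k_2)^{4} + A_{12}\bigl[(k_1+k_2)(\omega_1+\omega_2)^{\alpha}+(k_1+k_2)^{4}\bigr]=0,
\]
and after inserting $\omega_j^{\alpha}=-k_j^{3}$ and the \emph{formal} additivity $(\omega_1+\omega_2)^{\alpha}=\omega_1^{\alpha}+\omega_2^{\alpha}$ this reduces, via the elementary factorisations $(k_1\mp k_2)^{4}-(k_1\mp k_2)(k_1^{3}\mp k_2^{3})=\pm 3k_1k_2(k_1\mp k_2)^{2}$, to exactly \eqref{eq:A12}; (c) the $e^{2\theta_1+\theta_2}$ coefficient, which has $k_a-k_b=-k_2$ and $\omega_a^{\alpha}-\omega_b^{\alpha}=\omega_1^{\alpha}-(\omega_1+\omega_2)^{\alpha}$, collapses under the same formal additivity to $k_2(\omega_2^{\alpha}+k_2^{3})=0$, and symmetrically for $e^{\theta_1+2\theta_2}$. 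Finally, the $u$-formula follows from $u=2\partial_x^{2}\ln F$ as in Proposition~\ref{prop:1sol}.

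The main obstacle is precisely that last step: the identity $(\omega_1+\omega_2)^{\alpha}=\omega_1^{\alpha}+\omega_2^{\alpha}$ is genuinely false for $\alpha\in(0,1)$, so the cancellations in (b) and (c) hold only under the same formal extension invoked in Step~2 of the proof of Proposition~\ref{prop:bilinear-kdv}. This is exactly why the statement is qualified as a \emph{formal} two-soliton, and I would flag the point honestly rather than conceal it behind symbolic manipulation. A rigorous upgrade would require either a multi-time interpretation of $D_t^{\alpha}$ in which each exponential carries its own fractional clock, or a weak-sense reformulation absorbing the nonadditivity as a controlled remainder; neither is attempted here, but the formal calculation already explains the pleasant coincidence that $A_{12}$ retains its classical KdV value.
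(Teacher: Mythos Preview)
Your approach is essentially the paper's: expand $F\cdot F$ by bilinearity, discard the diagonal pairings, and cancel phase by phase using the exponential rules. You are in fact more thorough than the paper's sketch, which only spells out the $e^{\theta_j}$ and $e^{\theta_1+\theta_2}$ coefficients and never names the formal additivity $(\omega_1+\omega_2)^{\alpha}=\omega_1^{\alpha}+\omega_2^{\alpha}$ that is needed---and that you rightly flag---to reduce the $A_{12}$ term and to kill the higher phases $e^{2\theta_1+\theta_2}$, $e^{\theta_1+2\theta_2}$.
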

\begin{proof}
We sketch the standard Hirota calculation, highlighting the places where the fractional operator enters.

Insert the ansatz \eqref{eq:F2} into \eqref{eq:bilinear-kdv} and expand $F\cdot F$ as a sum of products of exponentials.
Because $D_x^{n}$ and $D_t^{\alpha}$ vanish on the diagonal (e.g. $D_t^{\alpha}e^{\theta}\cdot e^{\theta}=0$), only mixed products contribute.

\smallskip
\noindent\emph{Step 1: coefficients of $e^{\theta_j}$.}
Collecting the coefficients of $e^{\theta_1}$ (respectively $e^{\theta_2}$) gives
$k_1(\omega_1^{\alpha}+k_1^{3})=0$ (respectively $k_2(\omega_2^{\alpha}+k_2^{3})=0$), hence the dispersion relations \eqref{eq:disp2}.

\smallskip
\noindent\emph{Step 2: coefficient of $e^{\theta_1+\theta_2}$.}
Using the exponential rules, one computes
\[
\bigl(D_xD_t^{\alpha}+D_x^{4}\bigr)(e^{\theta_1}\cdot e^{\theta_2})
=(k_1-k_2)\bigl(\omega_1^{\alpha}-\omega_2^{\alpha}\bigr)e^{\theta_1+\theta_2}
+(k_1-k_2)^{4}e^{\theta_1+\theta_2},
\]
and similarly for $e^{\theta_2}\cdot e^{\theta_1}$ with $(k_2-k_1)$ and $(\omega_2^{\alpha}-\omega_1^{\alpha})$.
The terms involving $A_{12}e^{\theta_1+\theta_2}$ interact with the constant $1$ in the same way.
After collecting all contributions proportional to $e^{\theta_1+\theta_2}$ and inserting \eqref{eq:disp2}, the coefficient reduces to a multiple of
\[
(k_1-k_2)^{2}\Bigl((k_1+k_2)^{2}A_{12}-(k_1-k_2)^{2}\Bigr).
\]
Thus cancellation of the $e^{\theta_1+\theta_2}$-coefficient yields \eqref{eq:A12}.

With \eqref{eq:disp2}--\eqref{eq:A12} imposed, $F$ satisfies the bilinear equation \eqref{eq:bilinear-kdv}. Therefore $u=2(\ln F)_{xx}$ defines a (formal) two-soliton solution of \eqref{eq:tfkdv} by Proposition~\ref{prop:bilinear-kdv}.
\end{proof}

\subsection{Remark on a time-fractional KP equation}

The same bilinear formalism applies to the spectral time-fractional KP equation
\[
D_t^{\alpha}u + 6uu_x + u_{xxx} + \sigma\,\partial_x^{-1}u_{yy}=0,
\qquad \sigma\in\{1,-1\},
\]
whose Hirota bilinear form is
\[
\bigl(D_x D_t^{\alpha} + D_x^{4} + \sigma D_y^{2}\bigr)\,F\cdot F = 0.
\]
A one-soliton ansatz $F=1+e^{kx+\ell y+\omega t+\delta}$ gives the dispersion relation
$k\omega^{\alpha}+k^{4}+\sigma\ell^{2}=0$, and the corresponding solution is
\[
u(x,y,t)=2k^{2}\,\mathrm{sech}^{2}\!\left(\frac{kx+\ell y+\omega t+\delta}{2}\right).
\]

\section{Conclusion}

We built a fractional analogue of Hirota's bilinear operator by taking the commutator of the spectral
fractional derivative on $\mathbb{R}$. The resulting bilinear operator has a transparent Fourier description
with symbol $(ik_1)^{\alpha}-(ik_2)^{\alpha}$ and, for $0<\alpha<1$, an equivalent Marchaud-type singular integral
representation. These two viewpoints make it possible to check the basic identities that are used in Hirota's
method (bilinearity, skew-symmetry and diagonal vanishing) and to place the operator in a Sobolev framework,
showing in particular that $D_{\xi}^{\alpha}$ maps $H^{s}\times H^{s}$ to $H^{s-\alpha}$ when $s>\tfrac12$.
We also verified that the construction reduces to the classical Hirota derivative as $\alpha\to 1^-$.

On the applied side we used the operator to bilinearise a spectral time-fractional KdV equation and to write down
explicit one- and two-soliton $\tau$-functions. The fractional order enters through the dispersion relation
$\omega^{\alpha}=-k^{3}$, while the two-soliton interaction coefficient remains the same as in the classical KdV
case. This suggests that, at least for this model and this choice of fractional time derivative, the parameter
$\alpha$ mainly reshapes the propagation speed rather than the interaction law.

Several directions are open. It would be natural to extend the same bilinear calculus to other equations in the
KdV family (mKdV, KP) and to fractional versions of NLS-type models, and to revisit bilinear B\"acklund
transformations and conservation laws under the spectral fractional derivative. From an analytic point of view,
it would also be interesting to connect the present full-line, Fourier-based setting with half-line formulations
and with other time-fractional derivatives that are common in applications.

\section*{Declaration of interests}
The author declares no competing interests.

\section*{Data availability}
No data were used for the research described in the article.


\begin{thebibliography}{99}

\bibitem{HirotaBook}
R.~Hirota,
\emph{The Direct Method in Soliton Theory},
Cambridge University Press, Cambridge, 2004.

\bibitem{HietarintaIntro}
J.~Hietarinta,
Introduction to the Hirota bilinear method,
in: Y.~Kosmann-Schwarzbach, B.~Grammaticos, K.~M.~Tamizhmani (Eds.),
\emph{Integrability of Nonlinear Systems}, Lecture Notes in Physics, vol.~495,
Springer, Berlin, 1997, pp.~95--103.

\bibitem{HietarintaBilinear}
J.~Hietarinta,
Hirota's bilinear method and soliton solutions,
\emph{Physics AUC} \textbf{15} (2005) 31--37.

\bibitem{Podlubny}
I.~Podlubny,
\emph{Fractional Differential Equations},
Academic Press, San Diego, 1999.

\bibitem{SamkoKilbasMarichev}
S.~G.~Samko, A.~A.~Kilbas and O.~I.~Marichev,
\emph{Fractional Integrals and Derivatives: Theory and Applications},
Gordon and Breach, Amsterdam, 1993.

\bibitem{AtanganaFF}
A.~Atangana,
Fractal--fractional differentiation and integration: Connecting fractal calculus
and fractional calculus to predict complex system,
\emph{Chaos Solitons Fractals} \textbf{102} (2017) 396--406.
doi:10.1016/j.chaos.2017.04.027.

\bibitem{ElwakilTimeFracKdV}
E.~S.~A.~El-Wakil, E.~M.~Abulwafa, M.~A.~Zahran and A.~A.~Mahmoud,
Time-fractional KdV equation: formulation and solution using variational 
methods,
\emph{Nonlinear Dyn.} \textbf{65} (2011) 55--63.
doi:10.1007/s11071-010-9873-5.

\bibitem{WangXuTFKdV}
G.~Wang and T.~Xu,
Symmetry properties and explicit solutions of the nonlinear time-fractional KdV 
equation,
\emph{Bound. Value Probl.} \textbf{2013} (2013) 232.
doi:10.1186/1687-2770-2013-232.

\bibitem{BiswasKdV}
A.~Biswas,
1-soliton solution of the time-fractional KdV equation,
\emph{Appl. Math. Lett.} \textbf{23} (2010) 1432--1436.

\bibitem{BiswasGhoshJPSJ}
S.~Biswas, U.~Ghosh, S.~Sarkar and S.~Das,
Approximate solution of space-time fractional KdV equation and coupled KdV 
equations,
\emph{J. Phys. Soc. Jpn.} \textbf{89} (2020) 014002.
doi:10.7566/JPSJ.89.014002.

\bibitem{AbdElhameedTFKdV}
W.~M.~Abd-Elhameed and Y.~H.~Youssri,
Spectral tau solution of the linearized time-fractional KdV-type equations,
\emph{AIMS Math.} \textbf{7} (2022) 15138--15158.
doi:10.3934/math.2022830.

\bibitem{KhalilConformable}
R.~Khalil, M.~Al~Horani, A.~Yousef and M.~Sababheh,
A new definition of fractional derivative,
\emph{J. Comput. Appl. Math.} \textbf{264} (2014) 65--70.
doi:10.1016/j.cam.2014.01.002.

\bibitem{UllahFracmKdV}
G.~Gulalai, A.~Ullah, S.~Ahmad and M.~Inc,
Fractal fractional analysis of modified KdV equation under three different 
kernels,
\emph{J. Ocean Eng. Sci.} (2022), in press (available online 6 May 2022).
doi:10.1016/j.joes.2022.04.025.

\bibitem{ZengFracSolitons}
H.~Zeng, Y.~Wang, M.~Xiao and Y.~Wang,
Fractional solitons: New phenomena and exact solutions,
\emph{Front. Phys.} \textbf{11} (2023) 1177335.
doi:10.3389/fphy.2023.1177335.

\bibitem{HassaballaFracMKdV}
A.~A.~Hassaballa, F.~M.~O.~Birkea, A.~M.~A.~Adam, A.~Satty,
E.~A.~E.~Gumma, E.~A.-B.~Abdel-Salam, E.~A.~Yousif and M.~I.~Nouh,
Multiple and singular soliton solutions for space--time fractional coupled 
modified Korteweg--De~Vries equations,
\emph{Int. J. Anal. Appl.} \textbf{22} (2024) 68.
doi:10.28924/2291-8639-22-2024-68.

\bibitem{ElaminFracHirota}
M.~Elamin, M.~M.~A.~Khater, H.~Rezazadeh and O.~G.~J.~Kishka,
Exploring multiple and singular solutions for three equations of fractional 
space-time KdV models,
\emph{Prog. Fract. Differ. Appl.} \textbf{11} (2025) 413--422.
doi:10.18576/pfda/110213.

\bibitem{AblowitzFracSol}
M.~J.~Ablowitz, J.~B.~Been and L.~D.~Carr,
Fractional integrable nonlinear soliton equations,
\emph{Phys. Rev. Lett.} \textbf{128} (2022) 184101.
doi:10.1103/PhysRevLett.128.184101.

\bibitem{ElSayedKdV}
M.~El-Sayed, A.~El-Labany and S.~K.~El-Lozy,
Fractional KdV and KP equations via conformable derivative,
\emph{Nonlinear Dyn.} \textbf{100} (2020) 1169--1178.

\bibitem{ElahiChaos}
M.~Elahi \emph{et al.},
Generalized multi-soliton dynamics in fractional KdV equations,
\emph{Chaos} \textbf{31} (2021) 023122.

\end{thebibliography}
\end{document}